\newcommand{\lnT}{\ell_{\eps}}
\newtheorem{theorem}{Theorem}[section]
\newtheorem{lemma}[theorem]{Lemma}
\newtheorem{proposition}[theorem]{Proposition}
\def\blacksquare{
\thinspace\nobreak \vrule width 5pt height 5pt depth 0pt}
\newtheorem{remark}[theorem]{Remark}
\newenvironment{proof}{\begin{trivlist}
                      \item[]\hspace{0cm}{\bf Proof: }
                       \hspace{0cm} }{\hfill $\blacksquare$
                     \end{trivlist}}
\def\d{\; {\rm d}}
\def\O{{\cal O}}
\def\B{{\cal B}}
\def\cprime{$'$}
\def\R{\mathbb R}
\def\N{\mathbb N}
\def\Z{\mathbb Z}
\def\C{\mathbb C}
\def\sL{{\rm L}}
\def\sH{{\rm H}}
\def\Fc{{\cal F}}
\def\Gc{{\cal G}}
\def\Mc{{\cal M}}
\def\f{\mathsf f}
\def\eps{\varepsilon}
\def\dom2inf#1{\R^2\backslash#1}
\def\domzero{\Omega_0}
\def\domeps{\Omega_\eps}
\def\uzero{u_0}
\def\ueps{u_\eps}
\def\vzero{v_0}
\def\veps{v_\eps}
\def\rest{r_\eps}
\def\reste#1{r_\eps^#1}
\def\heps{h_{\eps}}
\def\W{{\mathfrak w}}
\def\domieps#1{\omega^#1_{\eps}}
\def\xieps#1{x_\eps^#1}
\def\domi#1{\omega^#1}
\def\Tc{{\cal T}}
\title{Interactions between moderately close inclusions\\ for the 2D Dirichlet-Laplacian}
\author{V. Bonnaillie-No\"el, M. Dambrine and C. Lacave}
\def\adrese{
\begin{description}
\item[V. Bonnaillie-No\"el:] D\'epartement de Math\'ematiques et Applications (DMA UMR 8553), Paris Sciences et Lettres, CNRS, ENS Paris, 45 rue d'Ulm, F-75230 PARIS cedex 05, France.\\
Email: \texttt{bonnaillie@math.cnrs.fr}\\
Web page: \url{http://www.math.ens.fr/~bonnaillie/}
\item[M. Dambrine:] 
Laboratoire de Math\'ematiques et de leurs Applications, UMR 5142, CNRS, Universit\'e de Pau et des Pays de l'Adour, av. de l'Universit\'e, BP 1155, F-64013 Pau Cedex, France.\\
Email: \texttt{marc.dambrine@univ-pau.fr}\\
Web page: \url{http://web.univ-pau.fr/~mdambrin/}
\item[C. Lacave:] Univ Paris Diderot, Sorbonne Paris Cit\'e, Institut de Math\'ematiques de Jussieu-Paris Rive Gauche, 
UMR 7586, CNRS, Sorbonne Universit\'es, UPMC Univ Paris 06, F-75013, Paris, France.\\
Email: \texttt{christophe.lacave@imj-prg.fr}\\
Web page: \url{https://www.imj-prg.fr/~christophe.lacave/}
\end{description}
}
\begin{document}

\maketitle

\abstract{
This paper concerns the asymptotic expansion of the solution of the Dirichlet-Laplace problem in a domain with small inclusions. This problem is well understood for the Neumann condition in dimension greater or equal than two or Dirichlet condition in dimension greater than two. The case of two circular inclusions in a bidimensional domain was considered in \cite{BD13}. In this paper, we generalize the previous result to any shape and relax the assumptions of regularity and support of the data. Our approach uses conformal mapping and suitable lifting of Dirichlet conditions. We also analyze configurations with several scales for the distance between the inclusions (when the number is larger than $2$). \\
} 

\noindent\textbf{Keywords:} perforated domain, Dirichlet boundary conditions, asymptotic expansion, conformal mapping. \\[10pt]
\textbf{AMS Subject Classification:} 35B25, 35C20, 35J05, 35J08.

\section{Introduction}

In many application fields ranging from electrical engineering to flow around obstacles, one has to consider Poisson equation in a domain presenting holes.
The presence of small inclusions or of a surface defect modifies the solution of the Laplace equation posed in a reference domain $\domzero$. If the characteristic size of the perturbation is small, then one can expect that the solution of the problem posed on the perturbed geometry is close to the solution of the reference shape. Asymptotic expansion with respect to that small parameter --the characteristic size of the perturbation-- can then be performed. 

The case of a single inclusion $\omega$, centered at the origin $0$ being either in $\domzero$ or in $\partial\domzero$, has been deeply studied, see~\cite{MazyaNazarovPlamenevskijT1,MazyaNazarovPlamenevskijT2,Ilin,LewinskiSokolowski,NazarovSokolowski,DambrineVial,DambrineVial2}. 
The techniques rely on the notion of {\em profile}, a normalized solution of the Laplace equation in the exterior domain obtained by {\em blow-up} of the perturbation. It is used in a fast variable to describe the local behavior of the solution in the perturbed domain. For Neumann boundary conditions in dimension greater or equal than two and Dirichlet boundary condition in dimension greater than two, convergence of the asymptotic expansion is obtained thanks to the decay of the {\em profile} at infinity.

The case of some inclusions was considered for example in the series of papers of A. Movchan and V. Maz'ya \cite{MazyaMovchan08,MazyaMovchan10} where an asymptotic approximation of Green's function is built and justified in a domain with many inclusions. The points where the inclusions are shunk are fixed in those works. In \cite{BDTV09}, the Neumann case where the distance between the holes tends to zero but remains large with respect to their characteristic size was investigated for two perfectly insulated inclusions: a complete multiscale asymptotic expansion of the solution to the Laplace equation is obtained in a three scales case. 

Recently, V. Bonnaillie-No\"el and M. Dambrine have considered in \cite{BD13} the case of two circular defects with homogeneous Dirichlet boundary conditions in a bidimensional domain. They distinguish the cases where the distance between the object is of order 1 and the case where it is larger than the characteristic size of the defects but small with respect to the size of the domain. They have derived the complete expansion and built a numerical method to solve the problem. Our aim is to extend their result to any geometry for the inclusion as well as to richer geometric configurations. Note that L. Chesnel and X. Clayes have proposed an alternative numerical scheme in \cite{CC14}. 

Let us make precise the problem under consideration. Let $\domzero$ be a bounded simply connected $C^2$ domain in $\mathbb{R}^2$. We consider $N$ inclusions of size $\eps$: for any $i=1,\dots,N$ let $\domieps{i}:= \xieps{i}+ \eps \domi{i}$ where $\domi{i}$ is a bounded simply connected open $C^2$ set containing $0$.
The centers of the inclusions $\xieps{i}$ are distinct, belong to $\domzero$ and tend to $x^{i}_{0} \in \domzero$ as $\eps \to 0$. We allow to have the same limit position $x^{i}_{0}= x^{j}_{0}$ for some $i\neq j$ but we assume that the distance for any $\eps>0$ is much larger than the size of the inclusion (see \eqref{hypo:eta} in Section~\ref{sec:4} for precise definitions). 
 Therefore, for $\eps$ small enough, we have $\domieps{i} \subset \domzero$ and $\overline{\domieps{i}}\cap \overline{\domieps{j}}=\emptyset$ for any $i\neq j$.
The domain depending on $\eps$ is then : 
\[
\domeps := \domzero \setminus \Big( \bigcup_{i=1}^N \overline{\domieps{i}}\Big).
\]
We search an asymptotic expansion for the solution $u_{\eps}$ of the Dirichlet-Laplace problem in $\domeps$:
\begin{equation}\label{Laplace-eps}
\left\{\begin{array}{rclcl}
-\Delta \ueps&=& \f & \mbox{ in }& \domeps ,\\
\ueps &=& 0 & \mbox{ on }& \partial \domeps,
\end{array}\right.
\end{equation}
where $\f\in\sH^{-1+\mu}(\domzero)$ with $\mu>0$. Since the $\sH^1$-capacity of $\domieps{i}$ tends to $0$ as $\eps$ tends to $0$ (see e.g. \cite{HP05} for all details on the capacity), it is already clear that $\ueps$ converges strongly in $\sH^1_{0}(\domzero)$ to $\uzero$ which is the unique solution of the Dirichlet-Laplace problem in $\domzero$:
\begin{equation}\label{Laplace-0}
\left\{\begin{array}{rclcl}
-\Delta \uzero&=& \f & \mbox{ in }& \domzero ,\\
\uzero &=& 0 & \mbox{ on }& \partial \domzero.
\end{array}\right.
\end{equation}
In this convergence, we have extended $\ueps$ by zero inside the inclusions.
At order $0$, we thus have $\ueps = \uzero + \reste0$ where $\|r^0_{\eps}\|_{\sH^1(\domzero)} \to 0$ as $\eps\to 0$. Let us notice that the remainder $\reste{0}$ satisfies
\begin{equation}\label{eq.reste0}
\begin{cases}
-\Delta \reste{0} =0 & \mbox{ in }\domeps,\\
\reste0 = 0 & \mbox{ on }\partial\domzero,\\
\reste0 = -\uzero & \mbox{ on }\partial\domieps{i},\quad\forall 1\leq i\leq N.
\end{cases}
\end{equation}
The purpose of this paper is to get the lower order terms in the expansion of $\ueps$. We follow the leading ideas of \cite{BD13} and generalize their results to any geometry of the defects while simplifying the proofs. We also relax the assumption on the regularity of $\f$ and do not more assume that $\f$ is supported far away the inclusions. The drawback is that the correctors built in this work are less explicit but remain numerically computable. \\

The paper is organized as follows. In Section~\ref{sec:2}, we present the basic tools essentially of complex analysis required for the sequel. Section~\ref{sec.3.1} is devoted to the presentation of the strategy in the case of a single inclusion, while Section~\ref{sec:4} deals with a finite number of holes separated by various distance. 

\section{Basic tools}\label{sec:2}

In this section, we introduce material which will be used in the following sections. The explicit solution to the Laplace problem is well known in the full plane, and we can also find in literature the Green function in or outside the unit disk. Conformal mapping is a convenient change of variable for the Laplace problem in order to get a formula inside or outside any simply connected compact set. In the sequel, we identify $\R^2$ and $\C$ through $(x_{1},x_{2})=x_{1}+i x_{2}=z$.

\subsection{Conformal mapping and Green function}

By the Riemann mapping theorem, there exists a unique biholomorphism $\Tc^i$ from $\R^2\setminus\overline{\domi{i}}$ to $\overline{\B(0,1)}^c$ such that
\[
\Tc^i(\infty)=\infty \qquad \text{and}\qquad {\rm arg}(\Tc^i)'(\infty)=0,
\]
which reads in the Laurent series decomposition as
\begin{equation}\label{eq.TLaurent}
\Tc^i(z)=\beta^i z + \sum_{k\in \N} \frac{\beta^i_{k}}{z^k} ,\qquad \forall z\in \B(0,R)^c,
\end{equation}
with $\beta^i \in \R^+_{*}$ is called the transfinite diameter (or logarithmic capacity) of $\omega^{i}$. In the previous equality, the radius $R$ is chosen large enough such that $\domi{i}\subset \B(0,R)$. A consequence of such a decomposition is the existence of a constant $C>0$ such that:
\begin{equation}\label{T-bz}
\| \Tc^i(z)-\beta^i z \|_{\sL^\infty(\R^2 \setminus \overline{\domi{i}})} \leq C. 
\end{equation}

For the Dirichlet problem in an open set $\Omega$, the Green function is a function $G_{\Omega}$ from $\Omega\times \Omega$ to $\R$ such that
\[
G_{\Omega}(x,y)=G_{\Omega}(y,x),\qquad G_{\Omega}(x,y)=0 \text{ if }x\in \partial \Omega,\qquad \Delta_{x} G_{\Omega}(x,y) =\delta(x-y),
\]
where $\delta$ is the Dirac measure centered at the origin. In the full plane, we have $G_{\R^2}(x,y)=\frac1{2\pi} \ln |x-y|$ and outside the unit disk we have 
\[
G_{\overline{\B(0,1)}^c}(x,y)=\frac1{2\pi} \ln \frac{|x-y|}{|x-y^*| |y|},
\]
with the notation
\[
y^* = \frac{y}{|y|^2}.
\]
Thanks to the conformal mapping $\Tc^i$ we deduce the Green function in the exterior of $\domi{i}$:
$$
G_{\R^2\setminus\overline{\domi{i}}}(x,y)=\frac1{2\pi} \ln \frac{|\Tc^i(x)-\Tc^i(y)|}{|\Tc^i(x)-\Tc^i(y)^*| |\Tc^i(y)|}.
$$
Inside $\domzero$, let us introduce the function $\mathcal G$ defined by
\begin{equation}\label{eq.Gphi}
\mathcal G[\varphi](x):=\int_{\partial\domzero}\partial_{n}G_{\domzero}(x,y)\varphi(y) \d\sigma(y),\qquad \forall x\in\domzero,
\end{equation}
where $G_{\domzero}$ can be defined as in $G_{\R^2\setminus\overline{\domi{i}}}$ replacing $\Tc^i$ by $\Tc^0$ a biholomorphism from $\domzero$ to $\B(0,1)$.
This function satisfies
\begin{equation*}\begin{cases}
-\Delta \mathcal G[\varphi]=0 & \mbox{ in }\domzero,\\
\mathcal G[\varphi]=\varphi& \mbox{ on }\partial\domzero.
\end{cases}\end{equation*}

\begin{remark}
 For any shape $\domzero$ and $\domi{i}$ we can compute numerically $\Tc^0$, $\Tc^{i}$, only by solving once a Laplace problem. This is discussed in \cite[Chapter 16, section 5]{Henrici}. We can also make explicit the conformal mapping for some geometries. \\
When $\domi{i}$ is an ellipse $\{ (x,y)\in\R^2,\ a^2 x^2 + b^2 y^2 < c^2\}$ with $a,b,c>0$, then we find 
$$(\Tc^{i})^{-1}(z)= \frac{c}a\frac{z+1/z}2 + i \frac{c}b\frac{z-1/z}{2i},\qquad{\rm and }\qquad \Tc^{i}(z)=\frac{c^{-1} z \pm \sqrt{c^{-2}z^2+b^{-2}-a^{-2}}}{a^{-1}+b^{-1}}.$$
Even if it is not in the geometrical setting of this paper, let us notice that if $\domi{i}$ is a segment, then the Joukowski function $h(z)=(z+1/z)/2$ gives an explicit formula for $\Tc^i$. Indeed, $h$ maps the exterior of the unit disk to the exterior of the segment $[-1,1]\times\{0\}$. Then, up to a translation, dilation and rotation, we find that $\Tc^{i}(z)=z\pm\sqrt{z^2-1}$ (where $\pm$ is chosen in order that $|\Tc^{i}(z)|>1$, depending on the definition of $\sqrt{\ }$) sends the exterior of the segment to the exterior of the unit disk. \\ 
\end{remark}

\subsection{Dirichlet problem in an exterior domain}

Even if the main goal of the Green function is to produce an explicit solution of the Laplace problem, we only use these functions to prove the following Lemma.

\begin{lemma}\label{lemme:relevement:domaine:exterieur}
Let $\omega$ be a bounded simply connected open $C^2$ subset of $\R^2$. If $F\in \sH^{1/2}(\partial \omega)$, then the boundary value problem
\begin{equation*}
\begin{cases}
-\Delta \Psi =0 &\text{ in } \R^2\setminus \overline{\omega},\\ 
\Psi=F &\text{ on } \partial \omega,
\end{cases}
\end{equation*}
admits a unique weak solution $\Psi$ in the variational space
\[
\sH^1_{\rm log}=\Big\{ \Phi; \ \frac{\Phi}{(1+|X|)\ln (2+|X|)}\in \sL^2(\R^2\setminus\overline{\omega})\ \text{ and }\ \nabla \Phi \in \sL^2(\R^2\setminus\overline{\omega}) \Big\}.
\]
Furthermore, we have the following properties:
\begin{enumerate}
\item\label{Point1} For any $n\in \N$, the solution $\Psi$ can be decomposed as
\begin{equation}\label{VLaurent}
\Psi(X)= \sum_{k=0}^n \Psi_{k}(X) + R_{n+1}(X)
\end{equation}
where $\Psi_{0}$ is constant, $\Psi_{k}(r,\theta)=\cfrac{a_{k}\cos (k\theta)+b_{k}\sin(k\theta)}{r^k}$ and 
\begin{equation}\label{eq.Rn}
|X|^n |R_{n}(X)| \leq C_{n}\|F\|_{\sL^\infty(\partial\omega)}
\end{equation}
for some $C_{n}>0$ independent of $F$.
\item\label{Point2} Let $\Tc$ be the biholomorphism from $\R^2\setminus \overline{\omega}$ onto the exterior of the unit disk (such that $\Tc(\infty)=\infty$ and $\arg\Tc'(\infty)=0$), we have
\begin{equation}\label{eq.Psi0}
\Psi_{0} = \Psi_{0}[F] 
= \frac1{2\pi} \int_{0}^{2\pi} F \circ \Tc^{-1} \begin{pmatrix} \cos \theta \\ \sin \theta \end{pmatrix} \d\theta 
= \frac1{2\pi} \int_{\partial \omega} F(Y) \sqrt{\det D\Tc (Y)} \d \sigma(Y).
\end{equation}
\item\label{Point3} $\Psi_{0}=0$ if and only if there exist $R>0$ and $\widehat \Psi \in \mathcal{H}(\R^2\setminus \B(0,R))$ such that 
$$\Psi ={\rm Re\ }\widehat \Psi\quad \mbox{ and }\quad \int_{\partial \B(0,R)} \frac{\widehat \Psi}z \d z =0.$$
\end{enumerate}
\end{lemma}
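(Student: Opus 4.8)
The strategy is to transport everything to the exterior of the unit disk via the conformal map $\Tc$, where all three claims become classical facts about harmonic functions, and then transport back. Let me go through the steps.

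Existence and uniqueness in $\sH^1_{\log}$: First I would recall that the space $\sH^1_{\log}$ is the natural energy space for the exterior Laplacian in 2D, with the seminorm $\|\nabla\Phi\|_{\sL^2}$ controlling $\Phi$ up to the logarithmic weight (this is a weighted Poincaré/Hardy inequality, essentially Hardy's inequality in the radial variable after passing to polar coordinates; the weight $(1+|X|)\ln(2+|X|)$ is exactly what is needed so that $\nabla\log\log|X|$ fails to be in $\sL^2$ but $\log|X|$-type growth is excluded). Given $F\in\sH^{1/2}(\partial\omega)$, I would lift $F$ to a function in $\sH^1_{\log}$ with compact support (standard trace theorem on the bounded boundary), subtract it off, and solve the resulting homogeneous-boundary problem by Lax–Milgram using coercivity of $\int|\nabla\Phi|^2$ on the subspace of $\sH^1_{\log}$ with zero trace; uniqueness follows because a function in $\sH^1_{\log}$ with zero trace and zero gradient must vanish.

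Claims \ref{Point1} and \ref{Point2}: Here I would use that $\Psi\circ\Tc^{-1}$ is harmonic on $\overline{\B(0,1)}^c$ and, since $\Tc$ is conformal with $\Tc(\infty)=\infty$, lies in the corresponding energy space on the exterior disk, hence is bounded at infinity and extends to a harmonic function on a neighborhood of $\infty$ on the Riemann sphere. Its expansion at infinity in the variable $\zeta = \Tc(X)$ is the classical one: a constant plus a series in $\cos(k\theta)/\rho^k$, $\sin(k\theta)/\rho^k$, with the constant term being the average of the boundary data over the circle — which gives \eqref{eq.Psi0} directly, the last equality being the change of variables $\d\theta = \sqrt{\det D\Tc(Y)}\,\d\sigma(Y)$. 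To get \eqref{VLaurent}–\eqref{eq.Rn} back in the variable $X$, I would use \eqref{T-bz}: $\Tc(X) = \beta X + O(1)$, so $1/\Tc(X)^k = \beta^{-k}X^{-k}(1+O(1/|X|))^{-k}$, and expanding and regrouping turns the series in powers of $1/\Tc(X)$ into a series in powers of $1/X$ of the stated form, with the tail $R_n$ controlled by $|X|^{-n}$; the constant $C_n$ depends on $F$ only through $\|F\|_{\sL^\infty(\partial\omega)}$ by the maximum principle applied to $\Psi$ on $\R^2\setminus\overline\omega$ (so $\|\Psi\|_\infty \le \|F\|_\infty$) together with Cauchy-type estimates for the Laurent coefficients. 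The main care needed is bookkeeping the rearrangement of the double series and checking that the constant term is genuinely unchanged by the reparametrization $\Tc(X)\mapsto X$ (it is, since both expansions share the same value at $\infty$).

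Claim \ref{Point3}: This is a direct consequence of the expansion together with 2D harmonic conjugation. If $\Psi_0=0$, then on $\R^2\setminus\B(0,R)$ (with $R$ as in \eqref{eq.TLaurent}, $\omega\subset\B(0,R)$), $\Psi$ has no constant or logarithmic part, so on this annular neighborhood of $\infty$ — which is not simply connected, so I must check the period — $\Psi$ admits a single-valued harmonic conjugate iff $\int_{\partial\B(0,R)}\dn\Psi\,\d\sigma = 0$; but that flux equals $2\pi$ times the coefficient of the $\log$ term in the expansion of $\Psi$, which is absent precisely because $\Psi_0$ was the constant term and there is no $\log$ (the energy bound $\nabla\Psi\in\sL^2$ already excludes $\log|X|$). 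Hence $\widehat\Psi = \Psi + i\Psi^*$ is a well-defined holomorphic function on $\R^2\setminus\B(0,R)$ with $\Psi = {\rm Re\,}\widehat\Psi$; writing its Laurent expansion, the condition $\int_{\partial\B(0,R)}\widehat\Psi/z\,\d z = 0$ is just the vanishing of its constant Laurent coefficient, which can be arranged by subtracting a real constant from $\widehat\Psi$ — and that real constant is $\Psi_0$, which is $0$ by hypothesis, so in fact no adjustment is needed. For the converse, if such $\widehat\Psi$ exists then $\Psi = {\rm Re\,}\widehat\Psi$ has, near $\infty$, an expansion with constant term equal to ${\rm Re}$ of the constant Laurent coefficient of $\widehat\Psi$, which is $0$ by the integral condition, hence $\Psi_0 = 0$ by uniqueness of the expansion in \ref{Point1}.

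The step I expect to be the main obstacle is the passage from the clean expansion in the conformal variable $\Tc(X)$ to the expansion \eqref{VLaurent} in $X$ with the sharp decay \eqref{eq.Rn}: one must justify that substituting the Laurent series \eqref{eq.TLaurent} for $\Tc$ into the series for $\Psi\circ\Tc^{-1}$, and rearranging, produces exactly a series of the stated harmonic form $\big(a_k\cos k\theta + b_k\sin k\theta\big)/r^k$ (not merely an asymptotic expansion), with a remainder bounded uniformly in terms of $\|F\|_{\sL^\infty}$ only — this is where the combination of the maximum principle, the uniform bound \eqref{T-bz}, and Cauchy estimates on the conformal image must be woven together carefully.
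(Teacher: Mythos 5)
Your proposal is correct in substance, but it routes Points \ref{Point1} and \ref{Point2} through the conformal image, whereas the paper works the other way around, and the difference matters for exactly the step you flag as the main obstacle. For Point \ref{Point1} the paper never composes with $\Tc$: it observes that $\widehat{\nabla \Psi}=\partial_1\Psi-i\partial_2\Psi$ is holomorphic on $\B(0,R)^c$ in the \emph{original} variable, that $\nabla\Psi\in\sL^2$ kills the $c_0$ and $c_1/z$ terms of its Laurent series, and that taking the explicit primitive $g(z)=\sum_{k\ge1}-c_{k+1}/(kz^k)$ gives $\Psi=\Psi_0+{\rm Re\,}g$ directly in the form \eqref{VLaurent} --- so the double-series rearrangement you worry about simply never arises. (Your version can be repaired the same way: $\Psi$ restricted to $\B(0,R)^c$ is already a bounded harmonic function with square-integrable gradient, so it has its own expansion in $X$; there is no need to expand in $\zeta=\Tc(X)$ and substitute \eqref{eq.TLaurent}.) The estimate \eqref{eq.Rn} is then obtained exactly as you sketch: maximum principle for $\|\Psi\|_{\sL^\infty}$, mean value formula for $\nabla\Psi$, Cauchy estimates for the $c_k$, and summation of the tail. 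Conversely, for Point \ref{Point2} your route (mean value at infinity of the bounded harmonic function $\Psi\circ\Tc^{-1}$, then the change of variables $\d\theta=\sqrt{\det D\Tc}\,\d\sigma$) is arguably cleaner than the paper's, which starts from the Green representation \eqref{eq.Green}, invokes Kellogg--Warschawski for boundary regularity of $D\Tc$, and carries out a Cauchy--Riemann computation of $n(Y)\cdot D\Tc^T(Y)\Tc(Y)/|\Tc(Y)|^2$ to reach the same two expressions in \eqref{eq.Psi0}; each approach buys the formula, and the paper's extra work is essentially the justification of your asserted Jacobian identity. Your Point \ref{Point3} argument is the paper's, decorated with a period/flux discussion that is already subsumed by the absence of the $c_1/z$ term in $\widehat{\nabla\Psi}$.
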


\begin{proof}
The well-posedness in the variational space is a standard result coming from Lax-Milgram theorem (see e.g. \cite{GiroireThese}).

Next, we note that $\nabla \Psi =\begin{pmatrix} \partial_{1} \Psi \\ \partial_{2} \Psi\end{pmatrix}$ is divergence and curl free, so 
$$\widehat{\nabla \Psi} (z):= \partial_{1} \Psi(x,y)-i \partial_{2} \Psi(x,y)$$ 
is holomorphic because it verifies the Cauchy-Riemann equations. Hence, $\widehat{\nabla \Psi}$ admits a Laurent series decomposition on $\B(0,R)^c$, with $R$ such that $\omega\subset \B(0,R)$, and as $\nabla \Psi$ is square integrable, we deduce that
\[
\widehat{\nabla \Psi} (z) = \sum_{k=2}^{+\infty} \frac{c_{k}}{z^k},\quad \text{for all } z\in \B(0,R)^c.
\]
Obviously, the function
$$g(z):=\sum_{k=1}^{+\infty} \frac{-c_{k+1}}{kz^k}$$ 
is a holomorphic primitive of $\widehat{\nabla \Psi}$. Decomposing the function $g$ in real and imaginary part $$g(z)=g_{1}(x,y) + i g_{2}(x,y),$$ we verify that
\[
\frac{{\rm d} g}{{\rm d} z}(z) 
=\tfrac12\big( \partial_{1}g_{1} + \tfrac1{i}\partial_{2}g_{1} \big)+\tfrac{i}{2}\big( \partial_{1}g_{2} + \tfrac1{i}\partial_{2}g_{2} \big)
=\partial_{1}g_{1} -i \partial_{2}g_{1}, 
\]
where we have used the Cauchy-Riemann equations on $g$. Therefore, there exists $\Psi_{0}\in \R$ such that
\begin{equation}\label{Laurent-Psi}
 \Psi(x,y)=\Psi_{0}+ g_{1}(x,y)=\Psi_{0} + {\rm Re\ }g(z)= \Psi_{0}+ {\rm Re\ }\Big(\sum_{k=1}^{+\infty} \frac{-c_{k+1}}{kz^k}\Big) = \Psi_{0}+ \sum_{k=1}^{+\infty} \Psi_{k}(x),
\end{equation}
with 
$$a_{k} = - {\rm Re}\frac{c_{k+1}}k,\qquad \mbox{ and }\qquad b_{k} = - {\rm Im}\frac{ c_{k+1}}k.$$ 
This ends the proof of the decomposition of Point~\ref{Point1}. We establish \eqref{eq.Rn} at the end of this proof. 

We use \eqref{Laurent-Psi} to prove the third point. If $\Psi_{0}=0$, then $\Psi(x,y)={\rm Re\ } g(z)$ with $g\in \mathcal{H}(\C \setminus \B(0,R))$ and we compute by the Cauchy residue theorem that 
$$\int_{\partial \B(0,R)} \frac{g}z \d z =0.$$ 
At the opposite, let us assume that there exist $R$ and $\widehat \Psi \in \mathcal{H}(\C\setminus \B(0,R))$ such that 
$$\Psi ={\rm Re\ }\widehat \Psi\qquad \mbox{ and }\qquad \int_{\partial \B(0,R)} \frac{\widehat \Psi}z \d z =0.$$ 
We decompose $\widehat \Psi$ in Laurent series $\widehat \Psi = \sum_{k\in \Z} {d_k}{z^{-k}}$, and the condition 
$$\int_{\partial \B(0,R)} \frac{\widehat \Psi}z \d z =0$$ 
reads as $d_{0}=0$. As $\Psi ={\rm Re\ }\widehat \Psi$, we conclude that $\Psi_{0}={\rm Re\ }d_{0}=0$. This establishes Point~\ref{Point3}.

Concerning the second statement, we write the expression of the solution $\Psi$ in terms of the Green function:
\begin{equation}\label{eq.Green}
\Psi(X) = \int_{\partial \omega} \partial_{n} G(Y,X) F(Y) \d \sigma(Y),
\end{equation}
where $n$ is the outgoing normal vector of $\R^2\setminus \overline{\omega}$. Thanks to the explicit formula of $G$, we compute:
\[
\Psi(X) = \frac1{2\pi}\int_{\partial \omega} F(Y) n(Y) \cdot D\Tc^T(Y) \Big(\frac{\Tc(Y)-\Tc(X)}{|\Tc(Y)-\Tc(X)|^2} -\frac{\Tc(Y)-\Tc(X)^*}{|\Tc(Y)-\Tc(X)^*|^2} \Big) \d \sigma(Y).
\]
Indeed, the assumption $\partial \omega\in C^2$ allows us to apply the Kellogg-Warschawski theorem (see \cite[Theorem 3.6]{Pomm}) to state that $D\Tc^{-1}$ is continuous up to the boundary.

When $X\to \infty$, we note that $\Tc(X)\to \infty$ and $\Tc(X)^*\to 0$, hence it is clear that
\[
\Psi_{0} = - \frac1{2\pi}\int_{\partial \omega} F(Y) n(Y) \cdot D\Tc^T(Y)\frac{\Tc(Y)}{|\Tc(Y)|^2} \d \sigma(Y).
\]
A parametrization of $\partial \omega$ can be 
$$Y= \gamma (\theta)=\Tc^{-1}\begin{pmatrix} \cos \theta \\ \sin \theta \end{pmatrix}\quad\mbox{ for }\quad\theta \in [0,2\pi].$$ 
Hence, $ (\gamma' (\theta))^{\perp} = -D\Tc^{-1}(\Tc(Y)) \Tc(Y)$ where we have used the Cauchy-Riemann equations. We compute again by the Cauchy-Riemann equations that
\[
| \gamma' (\theta)|^2 = \Tc(Y) \cdot (D\Tc^{-1}(\Tc(Y)))^T D\Tc^{-1}(\Tc(Y)) \Tc(Y)= \det D\Tc^{-1}(\Tc(Y)) | \Tc(Y) |^2 = \frac1{ \det D\Tc(Y)}.
\]
We deduce that 
\begin{equation*}\begin{split}
n(Y)= &\frac{(\gamma' (\theta))^{\perp}}{| \gamma' (\theta)|}= - \sqrt{\det D\Tc(Y)} D\Tc^{-1}(\Tc(Y)) \Tc(Y)= - \sqrt{\det D\Tc(Y)} (D\Tc(Y))^{-1} \Tc(Y)\\
=&- \sqrt{\det D\Tc(Y)} \frac{(D\Tc(Y))^T}{\det D\Tc(Y)} \Tc(Y)=- \frac{(D\Tc(Y))^T}{\sqrt{\det D\Tc(Y)} }\Tc(Y) .
\end{split}\end{equation*}
Hence, we get
\[
n(Y) \cdot D\Tc^T(Y)\frac{\Tc(Y)}{|\Tc(Y)|^2} =
- \frac{ \Tc(Y)}{\sqrt{\det D\Tc(Y)}} \cdot D\Tc(Y) D\Tc^T(Y)\frac{\Tc(Y)}{|\Tc(Y)|^2} = -\sqrt{\det D\Tc(Y)}.
\]
This allows to conclude:
\begin{align*}
\Psi_{0} &= \frac1{2\pi}\int_{\partial \omega} F(Y) \sqrt{\det D\Tc(Y)} \d \sigma(Y)\\
&= \frac1{2\pi}\int_0^{2\pi} F(\gamma(\theta)) \sqrt{\det D\Tc(\gamma(\theta))} | \gamma' (\theta)| \d \theta 
= \frac1{2\pi} \int_{0}^{2\pi} F \circ \Tc^{-1} \begin{pmatrix} \cos \theta \\ \sin \theta \end{pmatrix} \d \theta .
\end{align*}
An important consequence of this equality is the following estimate:
\begin{equation}\label{rem.2.2}
 |\Psi_0[F] | \leq \|F\|_{\sL^\infty(\partial \omega)}.
\end{equation}

Now, we note by the Laurent series decomposition \eqref{Laurent-Psi} that there exists $R$ large enough such that $\| \Psi(X) \| \leq |\Psi_0[F] | + \|F\|_{\sL^\infty(\partial \omega)}$ for all $X\in\B(0,R)^c$. Therefore, $ \Psi$ is bounded by $2\|F\|_{\sL^\infty(\partial \omega)}$ outside this ball, and in $\B(0,R)\setminus \overline{\omega}$ we use the maximum principle to state that
\[
\| \Psi(X) \| \leq \| \Psi \|_{\sL^\infty(\partial\omega\cup \partial \B(0,R))} \leq 2 \|F\|_{\sL^\infty(\partial \omega)} \qquad \forall X \in \B(0,R)\setminus \overline{\omega},
\]
hence
\[
\| \Psi \|_{\sL^\infty(\R^2\setminus \overline{\omega})}\leq 2\|F\|_{\sL^\infty(\partial \omega)}.
\]
Next, we consider $R_{0}>1$ such that $\omega\subset \B(0,R_{0}-1)$. Since $\Psi$ is harmonic, the mean value formula implies
\[\| \nabla \Psi \|_{\sL^\infty(\partial \B(0,R_{0}))} \leq 2 \| \Psi \|_{\sL^\infty( \B(0,R_{0}-1)^c)}\leq 4 \|F\|_{\sL^\infty(\partial \omega)}.\]
Combine this estimate with the Cauchy formulas gives
\[
|c_{k}|=\Big| \frac1{2i\pi} \int_{\partial \B(0,R_{0})} \widehat{\nabla \psi}(z) z^{k-1} \d z\Big| \leq 4 R_{0}^k \|F\|_{\sL^\infty(\partial \omega)}.
\]
Therefore, for any $|X|\geq 2 R_{0}$ and any $n\geq 1$, we have
\[
 |X|^n |R_{n}(X)| \leq \sum_{k=n}^\infty \frac{|c_{k+1}|}{k |X|^{k-n}} \leq 4 \| F \|_{\sL^\infty(\partial \omega)} \sum_{k=n}^\infty \frac{R_{0}^{k+1}}{ (2R_{0})^{k-n}} \leq 8 R_{0}^{n+1} \| F \|_{\sL^\infty(\partial \omega)} .
 \]
In $\B(0,2R_{0})\setminus \overline{\omega}$, it is clear that
\[
 |X|^n |R_{n}(X)| \leq |X|^n |\Psi (X) |+ |X|^n |\Psi_{0} | + \sum_{k=1}^{n-1} \frac{|c_{k+1}|}{k} |X|^{n-k}\leq C(n,R_{0}) \| F \|_{\sL^\infty(\partial \omega)}.
\]
This ends the proof of \eqref{eq.Rn}.
\end{proof}

\begin{remark}
The previous lemma is still available assuming less regularity on $\omega$ than $C^2$. 
To write a representation formula, it is enough to assume that $\omega$ is $C^{1,\alpha}$ with a finite number of corners with openings in $(0,2\pi)$ \cite[p. 20]{HW08}. 
We also used the maximum principe in $(\B\setminus\overline{\omega})$, this requires that $(\B\setminus\overline{\omega})$ is the interior ball property that is $\overline{\omega}$ has the exterior ball property \cite[Chapter 3]{GT}. This requires that only corners with opening in $(0,\pi)$ could be considered.\\
If $\omega$ is less regular (with crack, for example), the main difficulty is to establish \eqref{eq.Psi0} or \eqref{eq.Green}, see \cite{CoDaDu03}. 
Nevertheless, to justify our construction for any inclusion, we only need \eqref{VLaurent} and \eqref{eq.Rn}.
In the case of domain with cracks, decomposition \eqref{VLaurent} is still valid but it requires more sophistical tools to prove estimates like \eqref{eq.Rn} (see \cite{CoDaDu03}) and we do not want to enter in this special feature. 
\end{remark}

\begin{remark}
When $\omega$ is the unit ball $\B$, then we recover the statement of \cite[Lemma 2.1 (3)]{BD13} and 
$$\int_{\partial\omega} F=0\quad \Longrightarrow\quad \Psi_{0}=0.$$
The property of the zero mean value is crucial in \cite{BD13} in the case of circular inclusions : it implies the profile decay at infinity and allows to construct the terms of the expansion. 
In the present paper, even if the boundary condition has the zero mean value, the associated profile does not decay at infinity and we have to lift $\Psi_{0}$ suitably (see Section~\ref{sec.3.1}). 
 \end{remark}
We end this section by recalling the following classical elliptic estimate (see e.g. \cite[Theorem 2.10]{GT}).

\begin{lemma}\label{lem.GT}
Let $x_0\in \domzero$ and $\delta >0$ such that $\B(x_0,\delta)\subset \domzero$. Then, for any $n\in \N$, there exists $C_n(\delta)>0$ such that
\[
\| D^n u \|_{\sL^\infty(\B(x_0,\delta/2))} \leq C_n(\delta) \| u \|_{\sL^\infty(\B(x_0,\delta))}
\]
for any harmonic function $u$ (i.e. such that $\Delta u =0$ on $\B(x_0,\delta)$).
\end{lemma}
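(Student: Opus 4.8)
The assertion is the classical interior estimate for harmonic functions, stated for instance as \cite[Theorem 2.10]{GT}; I indicate the standard mean-value proof and, since the rest of the paper lives in the complex-analytic world, a two-dimensional variant via Cauchy's estimates. By translating we may assume $x_0=0$. The two ingredients are: (i) every partial derivative $\partial^\alpha u$ of a harmonic function is again harmonic, because $\partial^\alpha$ commutes with $\Delta$; and (ii) the one-step gradient bound: if $v$ is harmonic on $\B(y,r)$, then the mean value property and the divergence theorem give
\[
\partial_i v(y) = \frac{1}{|\B(y,r)|}\int_{\B(y,r)}\partial_i v\,\d x = \frac{1}{|\B(y,r)|}\int_{\partial \B(y,r)} v\,n_i\,\d\sigma,
\]
whence $|\nabla v(y)|\leq \frac{2}{r}\,\|v\|_{\sL^\infty(\B(y,r))}$ in dimension $2$.

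The estimate then follows by iterating (ii) along a chain of nested balls. Fix $\rho_k:=\frac{\delta}{2}+\frac{k\delta}{2n}$ for $k=0,\dots,n$, so that $\rho_0=\delta/2$, $\rho_n=\delta$, and $\rho_{k-1}+\frac{\delta}{2n}=\rho_k$. One proves by induction on $k$ that
\[
\| D^k u\|_{\sL^\infty(\B(0,\rho_{n-k}))}\leq \Big(\tfrac{4n}{\delta}\Big)^k\,\| u\|_{\sL^\infty(\B(0,\delta))},
\]
the case $k=0$ being trivial: for $y\in \B(0,\rho_{n-k-1})$ the ball $\B(y,\delta/(2n))$ is contained in $\B(0,\rho_{n-k})$, so applying (ii) to each component of the harmonic tensor $D^k u$ on that ball and invoking the inductive hypothesis yields the bound at level $k+1$. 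Taking $k=n$ gives the claim with $C_n(\delta)=(4n/\delta)^n$ (up to a combinatorial factor coming from the number of multi-indices $\alpha$ with $|\alpha|=n$, which does not affect the form $C(n)\,\delta^{-n}$).

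Alternatively, in $\R^2$ one argues as follows, in the spirit of Lemma~\ref{lemme:relevement:domaine:exterieur}. On the simply connected disk $\B(0,\delta)$ write $u=\mathrm{Re}\,f$ with $f$ holomorphic and $\mathrm{Im}\,f(0)=0$. The Borel--Carathéodory inequality gives $\|f\|_{\sL^\infty(\B(0,3\delta/4))}\leq 6\sup_{\B(0,\delta)}u+7|u(0)|\leq 13\,\|u\|_{\sL^\infty(\B(0,\delta))}$; Cauchy's estimates on circles of radius $\delta/4$ centred at points $y\in \B(0,\delta/2)$ give $|f^{(n)}(y)|\leq n!\,(\delta/4)^{-n}\|f\|_{\sL^\infty(\B(0,3\delta/4))}$; and, by the Cauchy--Riemann equations, every $n$-th order partial derivative of $u=\mathrm{Re}\,f$ equals $\pm\,\mathrm{Re}\,f^{(n)}$ or $\pm\,\mathrm{Im}\,f^{(n)}$, so $|D^n u(y)|\leq |f^{(n)}(y)|$, which concludes.

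There is no real obstacle here — the result is elementary — and the only point demanding a little care is the bookkeeping in the iteration: one must keep every intermediate ball inside $\B(x_0,\delta)$ and keep track of how the constant degrades with $n$ (polynomially in $1/\delta$, with an $n!$- or $n^n$-type dependence on the order), which is exactly the form $C_n(\delta)$ asserted.
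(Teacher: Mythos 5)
The paper offers no proof of this lemma at all --- it simply recalls it as a classical elliptic estimate and cites \cite[Theorem 2.10]{GT}, whose proof is exactly the mean-value/divergence-theorem iteration over nested balls that you give, so your argument is correct and coincides with the intended one (your bookkeeping with the radii $\rho_k$ and the resulting constant $C_n(\delta)=(4n/\delta)^n$ is fine). The complex-analytic alternative via Borel--Carath\'eodory and Cauchy's estimates is also valid, but not needed.
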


\section{One inclusion}\label{sec.3.1}

In the case of one inclusion, we omit the index $1$ and denote $\omega$, $x_{\eps}$, $\Tc_{\eps}$, $\ldots$
\subsection{One iteration}
To deal with Equation \eqref{eq.reste0} satisfied by $\reste0$, we consider the more general boundary values problem:
\begin{equation}\label{eq.veps}
\begin{cases}
-\Delta \veps = 0 &\text{ in } \domeps,\\
\veps = \varphi &\text{ on }\partial\domzero,\\
\veps = f &\text{ on }\partial\omega_{\eps}.
\end{cases}
\end{equation}
At the first order, $\veps$ is approximated by $\vzero={\mathcal G}[\varphi]$, see \eqref{eq.Gphi}, which is the solution of
\begin{equation*}
\begin{cases}
-\Delta \vzero = 0&\text{ in } \domzero,\\
\vzero = \varphi &\text{ on }\partial\domzero.
\end{cases}
\end{equation*}
In order to gain one order in the remainder of the asymptotic expansion of $\ueps$, we need to introduce some notations. 
For any function $f\in\sH^{1/2}(\partial\omega_{\varepsilon})$, we define $F\in \sH^{1/2}(\partial\omega)$ by
$$F(X) = f(x),\qquad\forall X=\frac{x-x_{\eps}}{\eps}\in \partial\omega.$$
By Lemma~\ref{lemme:relevement:domaine:exterieur}, there exists a unique function $\Psi\in \sH^1_{\rm log}$ solution of
\begin{equation}\label{eq.F}
\begin{cases}
-\Delta \Psi = 0& \text{ in } \R^2\setminus\overline{\omega},\\
\Psi=F & \text{ on }\partial\omega.
\end{cases}\end{equation}
This solution will be denoted by 
\begin{equation}\label{eq.Fcomega}
\Fc_{\omega}[F] := \Psi.
\end{equation}
Outside $\omega_{\eps}$, we can consider the associated exterior problem
\begin{equation*}
\begin{cases}
-\Delta \psi = 0& \text{ in } \R^2\setminus\overline{\omega_{\eps}},\\
\psi=f & \text{ on }\partial\omega_{\eps}.
\end{cases}\end{equation*}
Then we have for any $x\in\R^2\setminus\overline{\omega_{\eps}}$ 
\begin{equation*}
\Fc_{\omega_{\eps}}[f](x) := \psi(x)=\Psi(X)=\Fc_{\omega}[F](X),\qquad \mbox{ with }X=\frac{x-x_{\eps}}{\eps}\in\R^2\setminus \overline{\omega}.
\end{equation*}
The function $\Psi$ can be decomposed in polar coordinates (see \eqref{VLaurent}) as 
\begin{equation}\label{eq.PsiPsi}
\Psi(r,\theta)=\Psi_{0}+\widetilde{\Psi}(r,\theta)\quad \mbox{ with }\quad \widetilde{\Psi}(r,\theta)=\sum_{k\geq1} d_{k}(\theta) r^{-k}
\ \mbox{ and }\ d_{k}\in \text{Span}(\sin(k\cdot),\cos(k\cdot)).
\end{equation}
Note that $\widetilde \Psi=R_{1}$ and then satisfies estimate~\eqref{eq.Rn}. 
Let us define
\begin{equation}\label{Teps}
\Tc_{\eps}(x) := \eps \Tc\Big( \frac{x-x_{\eps}}{\eps}\Big)=\eps \Tc\Big(X\Big),
\end{equation}
which maps the exterior of $\overline{\omega_{\eps}}$ to the exterior of the disk $\overline{\B(0,\eps)}$. \\

Inspired by the case of the ball \cite{BD13}, we define the two main ingredients of our construction. 
The first one is adapted to lift a constant function on $\partial\omega_{\eps}$. In $\sH^1_{\rm log}$, the unique solution of the problem
$$
\begin{cases}
-\Delta v=0 &\mbox{ in }\R^2\setminus\overline{\omega_{\varepsilon}},\\ 
v=c&\mbox{ on }\partial\omega_{\varepsilon}
\end{cases}$$
is the constant function $v=c$. But this generates a constant term on $\partial\domzero$ and the unique solution of 
$$
\begin{cases}
-\Delta V=0 &\mbox{ in }\domzero,\\ 
V = c&\mbox{ on }\partial\domzero
\end{cases}$$
is the constant $c$ itself. Hence, we can't reduce the remainder with this procedure. \\
When $\omega$ is the unit ball, another way to lift the constant $c$ in $\R^2\setminus\overline{\omega_{\eps}}$ is to consider the function $x\mapsto c\frac{\ln|x-x_{\eps}|}{\ln\eps}$. 
For general inclusion $\omega$, we introduce the function
\begin{equation*}
\ell_{\eps}(x):=\ln{|\Tc_{\eps}(x)|},
\end{equation*}
which is a solution of
$$
\begin{cases}
-\Delta \ell_{\eps}=0 &\mbox{ in }\R^2\setminus\overline{\omega_{\varepsilon}},\\ 
\ell_{\eps}=\ln{\varepsilon}&\mbox{ on }\partial\omega_{\varepsilon}.
\end{cases}$$
As $\ell_{\eps}$ behaves at infinity like $\ln |x|$, this function does not belong to $\sH^1_{\rm log}$, but this allows us to find a non trivial harmonic extension of constant into $\overline{\omega_{\varepsilon}}^c$. 
The trace of the function $\lnT$ is non zero trace on the outer boundary $\partial\domzero$ and it is described by the behaviour at infinity of the Riemann mapping \eqref{eq.TLaurent}, and one gets thanks to \eqref{T-bz}
\begin{equation}\label{eq.lnT-lnbeta}
\| \lnT-\ln{\beta |\cdot-x_{\eps}|}\|_{\sL^\infty(\partial\domzero)}\leq M\varepsilon. 
\end{equation}
The second ingredient is to correct {\it the main order} of the traces on the outer boundary $\partial\domzero$. For this, we consider the function $w_{\omega}:=\Gc[\ln\beta|\cdot-x_{\eps}|]$ (see \eqref{eq.Gphi}) verifying
$$\begin{cases}
-\Delta w_{\omega}=0 &\text{ in }\domzero,\\ 
w_{\omega}=\ln{\beta |\cdot-x_{\eps}|}&\text{ on }\partial\domzero.
\end{cases}$$
Notice that the boundary condition $\ln{\beta |\cdot-x_{\eps}|}$ is $C^1$ on $\partial\domzero$, thus $w_{\omega}\in C^1(\domzero)$ (independently of $\omega$). 
Since $\domzero$ is a bounded domain in $\R^2$, we have by the maximum principle
\begin{equation}\label{eq.heps0}
\| w_{\omega}\|_{\sL^\infty(\domzero)}
\leq \| \ln{\beta |\cdot-x_{\eps}|}\|_{\sL^\infty(\domzero)} 
\leq |\ln (\beta{\rm diam}(\domzero))|,
\end{equation}
where ${\rm diam}(\partial\domzero)$ denotes the diameter of $\domzero$.

\begin{remark}
The function $w_{\omega}$ encodes two informations: the shape of $\omega$ via the transfinite diameter $\beta$ and the location of the inclusion {\it via} $x_{\eps}$.
\end{remark}

Unfortunately this function $w_{\omega}$ produces a trace on the small inclusion $\partial\omega_{\eps}$. 
The main idea of the construction is that an appropriate linear combination of the profiles $\lnT$ and $w_{\omega}$ allows to reduce the error on both $\partial\omega_{\varepsilon}$ and $\partial\domzero$.

\begin{proposition}
\label{prop:une:iteration}
Let $\veps$ be the solution of \eqref{eq.veps}. There is an harmonic function $\rest$ defined on $\Omega_{\varepsilon}$ such that
\begin{equation}
\label{eq.DLueps}
\veps(x)=\vzero(x)+\Fc_{\omega}[F]\left(\frac{x-x_{\eps}}{\varepsilon}\right)-\Psi_{0}[F]+\cfrac{\vzero(x_{\eps})-\Psi_{0}[F]}{w_{\omega}(x_{\eps})-\ln{\varepsilon}} \W_{\eps,\omega}(x)+\varepsilon\rest(x),
\end{equation}
with $\vzero={\mathcal G}[\varphi]$ defined in \eqref{eq.Gphi}, $\Psi_{0}[F]$ in \eqref{eq.Psi0}, $\Fc_{\omega}[F]$ in \eqref{eq.F}--\eqref{eq.Fcomega} and, with $\Tc_{\eps}$ given in \eqref{Teps}, 
$$\W_{\eps,\omega}:= \lnT-w_{\omega}\qquad\mbox{ with }\qquad
\lnT=\ln|\Tc_{\eps}|\quad\mbox{ and }\quad w_{\omega}=\Gc[\ln\beta|\cdot-x_{\eps}|].$$
Thus we have
$$\|\rest\|_{\sL^{\infty}(\partial\domzero \cap \partial\omega_{\varepsilon})} \leq 
C \left[ \| F\|_{\sL^{\infty}(\partial\omega)} + \| \varphi\|_{\sL^{\infty}(\partial\domzero)} \right].
$$
\end{proposition}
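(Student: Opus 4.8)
The plan is to verify that the function
\[
\rest(x) := \frac1\eps\Big[\veps(x)-\vzero(x)-\Fc_{\omega}[F]\Big(\tfrac{x-x_{\eps}}{\eps}\Big)+\Psi_{0}[F]-\cfrac{\vzero(x_{\eps})-\Psi_{0}[F]}{w_{\omega}(x_{\eps})-\ln\eps}\,\W_{\eps,\omega}(x)\Big]
\]
is harmonic on $\domeps$ and then to estimate its traces on $\partial\domzero$ and on $\partial\omega_\eps$, applying the maximum principle to conclude. Harmonicity is immediate: $\veps$, $\vzero$, $\W_{\eps,\omega}=\lnT-w_{\omega}$ and $x\mapsto \Fc_{\omega}[F]\big(\tfrac{x-x_\eps}{\eps}\big)$ are all harmonic on $\domeps$ (the translated-rescaled profile is harmonic outside $\overline{\omega_\eps}$ by \eqref{eq.F}, and $\lnT$ is harmonic outside $\overline{\omega_\eps}$ as noted), and constants are harmonic. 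Since $\rest$ is harmonic on $\domeps$ and continuous up to $\partial\domeps=\partial\domzero\cup\bigcup\partial\omega_\eps$, the maximum principle gives $\|\rest\|_{\sL^\infty(\domeps)}=\|\rest\|_{\sL^\infty(\partial\domzero\cup\partial\omega_\eps)}$, so it suffices to bound the boundary traces.

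Next I would compute the trace on $\partial\omega_\eps$. There $\veps=f$, so $\Fc_{\omega}[F]=F=f$ pulled back; the profile term cancels $f$ exactly, leaving $\eps\rest = \vzero(x_\eps)-\vzero(x)-\Psi_0[F]+\Psi_0[F]-\frac{\vzero(x_\eps)-\Psi_0[F]}{w_\omega(x_\eps)-\ln\eps}(\lnT-w_\omega)$, and on $\partial\omega_\eps$ one has $\lnT=\ln\eps$ while $w_\omega(x)=w_\omega(x_\eps)+O(\eps)$ because $w_\omega\in C^1(\domzero)$ with derivative bounded independently of $\omega$ (by Lemma~\ref{lem.GT} or directly from \eqref{eq.heps0} and interior estimates), so $\lnT-w_\omega = \ln\eps - w_\omega(x_\eps)+O(\eps)$, hence the last term equals $-(\vzero(x_\eps)-\Psi_0[F])+O(\eps)\cdot\frac{\vzero(x_\eps)-\Psi_0[F]}{w_\omega(x_\eps)-\ln\eps}$. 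Also $\vzero(x)=\vzero(x_\eps)+O(\eps)\|\vzero\|$ on $\partial\omega_\eps$ for the same reason. Collecting terms, the $O(1)$ parts cancel and $\eps\rest=O(\eps)\big(\|\vzero\|_{\sL^\infty}+|\vzero(x_\eps)|+|\Psi_0[F]|\big)$ on $\partial\omega_\eps$, where I use that $|w_\omega(x_\eps)-\ln\eps|\ge |\ln\eps|-C$ is bounded below for small $\eps$ (keeping the denominator under control is why this works), that $\|\vzero\|_{\sL^\infty(\domzero)}\le\|\varphi\|_{\sL^\infty(\partial\domzero)}$ by the maximum principle, and that $|\Psi_0[F]|\le\|F\|_{\sL^\infty(\partial\omega)}$ by \eqref{rem.2.2}.

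For the trace on $\partial\domzero$, there $\veps=\varphi=\vzero$, so the first two terms cancel; what remains is $\eps\rest = -\Fc_{\omega}[F]\big(\tfrac{x-x_\eps}{\eps}\big)+\Psi_0[F]-\frac{\vzero(x_\eps)-\Psi_0[F]}{w_\omega(x_\eps)-\ln\eps}(\lnT-w_\omega)$. On $\partial\domzero$, $w_\omega=\ln\beta|\cdot-x_\eps|$ by construction and $\lnT=\ln\beta|\cdot-x_\eps|+O(\eps)$ by \eqref{eq.lnT-lnbeta}, so $\lnT-w_\omega=O(\eps)$ there; combined with the bounded-below denominator and the bounds on $\vzero(x_\eps)$, $\Psi_0[F]$ above, that term is $O(\eps)(\|\varphi\|_{\sL^\infty}+\|F\|_{\sL^\infty})$. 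For the profile term, since $x-x_\eps$ stays bounded away from $0$ when $x\in\partial\domzero$, we have $|\tfrac{x-x_\eps}{\eps}|\ge c/\eps$, and by the decay estimate \eqref{eq.Rn} with $n=1$ (recall $\widetilde\Psi=R_1$), $|\Fc_\omega[F](X)-\Psi_0[F]|=|\widetilde\Psi(X)|\le C\eps\|F\|_{\sL^\infty(\partial\omega)}$ uniformly on $\partial\domzero$; hence $-\Fc_\omega[F]+\Psi_0[F]=O(\eps)\|F\|_{\sL^\infty}$ as well. Dividing through by $\eps$ gives the stated bound on both boundary pieces, and the maximum principle finishes the proof. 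The main obstacle is bookkeeping: making sure every $O(\eps)$ really is $O(\eps)$ uniformly in $\eps$ — in particular controlling the denominator $w_\omega(x_\eps)-\ln\eps$ from below and verifying that the $C^1$ norm of $w_\omega$ and the constant in \eqref{eq.Rn} do not degenerate — rather than any single deep estimate.
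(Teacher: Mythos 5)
Your proof follows the paper's argument exactly: the same definition of $\rest$ and the same harmonicity observation, the same trace computation on $\partial\domzero$ (using \eqref{eq.lnT-lnbeta} for $\lnT-\ln\beta|\cdot-x_{\eps}|$ and the decay \eqref{eq.Rn} of $\widetilde\Psi=R_1$ evaluated at $|X|\geq c/\eps$), the same trace computation on $\partial\omega_{\eps}$ (the constant parts cancel by the choice of the coefficient of $\W_{\eps,\omega}$, and the remaining variation of $\vzero$ and $w_{\omega}$ is $\O(\eps)$ by the mean value inequality and Lemma~\ref{lem.GT}), and the same auxiliary bounds $|\Psi_0[F]|\leq\|F\|_{\sL^\infty(\partial\omega)}$ from \eqref{rem.2.2}, $|\vzero(x_{\eps})|\leq\|\varphi\|_{\sL^\infty(\partial\domzero)}$ from the maximum principle, and $|w_{\omega}(x_{\eps})-\ln\eps|\geq|\ln\eps|-C$ from \eqref{eq.heps0}. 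The only blemish is a transcription slip on $\partial\omega_{\eps}$: the leftover there is $-\vzero(x)+\Psi_0[F]-\tfrac{\vzero(x_{\eps})-\Psi_0[F]}{w_{\omega}(x_{\eps})-\ln\eps}\left(\ln\eps-w_{\omega}(x)\right)$, whose last term contributes $+(\vzero(x_{\eps})-\Psi_0[F])+\O(\eps)$ rather than the sign you wrote, but the cancellation of the $\O(1)$ parts that you invoke is precisely the correct one and the estimate goes through unchanged.
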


\begin{remark}
Thanks to \eqref{eq.heps0}, $w_{\omega}-\ln\eps$ does not vanish for $\eps$ small enough. So that \eqref{eq.DLueps} makes sense.
\end{remark}

\begin{proof}
The harmonicity of the remainder $\rest$ follows from the harmonicity of $\vzero$, $\Fc_{\omega}[F]$, $w_{\omega}$ and $\lnT$. In order to establish the $\sL^{\infty}$ estimate, we compute the traces of the function $\rest$ on each componant $\partial\omega_{\eps}$ and $\partial\Omega$ of the boundary. 

\noindent $\bullet$ On the outer boundary $\partial\domzero$, one has for any $x\in \partial\domzero$
\begin{align*}
\varepsilon\rest(x)
&=\varphi(x)-\left[\varphi(x)+\Fc_{\omega}[F]\left(\frac{x-x_{\eps}}{\varepsilon}\right) -\Psi_{0}+\cfrac{\vzero(x_{\eps})-\Psi_{0}}{w_{\omega}(x_{\eps})-\ln\varepsilon}\big( \lnT(x)-\ln{\beta|x-x_{\eps}|}\big) \right]\\
&=-\left[\widetilde \Psi\left(\frac{x-x_{\eps}}{\varepsilon}\right)+\cfrac{\vzero(x_{\eps})-\Psi_{0}}{w_{\omega}(x_{\eps})-\ln\varepsilon}\big( \lnT(x)-\ln{\beta|x-x_{\eps}|}\big) \right],
\end{align*}
with $\widetilde \Psi (=R_{1})$ defined in \eqref{eq.PsiPsi} and $\Psi_{0}=\Psi_{0}[F]$. 
Therefore, one has according to \eqref{eq.lnT-lnbeta} and \eqref{eq.Rn},
\[
\varepsilon\|\rest\|_{\sL^\infty(\partial\domzero)}
\leq C_{1}\varepsilon \|F\|_{\sL^{\infty}(\partial\omega)}+ \varepsilon h_{\varepsilon} M\left(|\vzero(x_{\eps})|+|\Psi_{0}|\right),\qquad
\mbox{ with }h_{\varepsilon}=\frac{1}{w_{\omega}(x_{\eps})-\ln\varepsilon}. 
\]
Since $|\Psi_{0}|\leq \|F\|_{\sL^{\infty}(\partial\omega)}$ (cf. \eqref{rem.2.2}) and $|\vzero(x_{\eps})|\leq \|\varphi\|_{\sL^{\infty}(\partial\domzero)}$ (by the maximum principle), we get
\[
\|\rest\|_{\sL^{\infty}(\partial\domzero)} \leq C\left[ \| F\|_{\sL^{\infty}(\partial\omega)} + h_{\varepsilon} \| \varphi\|_{\sL^{\infty}(\partial\domzero)} \right].
\]

\noindent $\bullet$ On the boundary of the inclusion $\partial\omega_{\eps}$, one has for $x\in\partial\omega_{\eps}$
\begin{align*}
\varepsilon\rest(x)
&= f(x)-\left[\vzero(x)+F\left( \frac{x-x_{\eps}}{\varepsilon}\right)-\Psi_{0}+\cfrac{\vzero(x_{\eps})-\Psi_{0}}{w_{\omega}(x_{\eps})-\ln\varepsilon}\left( \ln\varepsilon-w_{\omega}(x)\right) \right]\\
&= -\left[\vzero(x)-\Psi_{0}+\cfrac{\vzero(x_{\eps})-\Psi_{0}}{w_{\omega}(x_{\eps})-\ln\varepsilon}\left( \ln\varepsilon-w_{\omega}(x)\right) \right]\\
&= -(\vzero(x)-\vzero(x_{\eps}))
-\cfrac{\vzero(x_{\eps})-\Psi_{0}}{w_{\omega}(x_{\eps})-\ln\varepsilon}\left( w_{\omega}(x_{\eps})-w_{\omega}(x)\right).
\end{align*}
As soon as $\eps$ is small enough, there exists $\delta>0$ such that $\omega_{\eps}\subset\B(x_{\eps},\tfrac\delta2)\subset\B(x_{\eps},\delta)\subset \domzero$. 
Using the mean value formula and Lemma~\ref{lem.GT}, we get
\begin{align*}
\varepsilon\|\rest\|_{\sL^\infty(\partial\omega_{\eps})}
&\leq |x-x_{\eps}|\ \|\nabla \vzero\|_{\sL^\infty(\B(x_{\eps},\frac\delta2))} 
+\heps \left(|\vzero(x_{\eps})|+|\Psi_{0}|\right) |x-x_{\eps}|\ \|\nabla w_{\omega}\|_{\sL^\infty(\B(x_{\eps},\frac\delta2))} \\
&\leq C \left[\varepsilon \| \varphi\|_{\sL^{\infty}(\partial\domzero)}+ \varepsilon h_{\varepsilon} \| F\|_{\sL^{\infty}(\partial\omega)} \right].
\end{align*}
\end{proof}

\begin{remark}
As in \cite{BD13}, the coefficients $(\alpha,\beta)$ in front of $\lnT$ and $w_{\omega}$ in \eqref{eq.DLueps} are uniquely determined 
when we try to reduce the trace boundaries on $\partial\domzero$ and $\partial \omega_{\eps}$. This gives respectively the two equations of the following system
$$
\begin{cases}
\alpha+\beta =0,\\
\alpha\ln\varepsilon+\beta w_{\omega}(x_{\eps})=-\vzero(x_{\eps})+\Psi_{0},
\end{cases}
$$
so that $\alpha=-\beta=\cfrac{\vzero(x_{\eps})-\Psi_{0}}{w_{\omega}(x_{\eps})-\ln\varepsilon}$.
The scale $h_{\eps}=\frac{1}{w_{\omega}(x_{\eps})-\ln\varepsilon}$ is analogous to those appearing in the case of circular inclusion in \cite[Relation (2.6)]{BD13}.
\end{remark}

\subsection{Recursive construction}

Let us come back to the initial problems~\eqref{Laplace-eps}--\eqref{Laplace-0}. As previously, we consider $\reste0=\ueps-\uzero$ which satisfies \eqref{eq.reste0}. This problem has the form~\eqref{eq.veps} with 
$$\varphi=0,\qquad f=-\uzero.$$
Thus the function $\vzero={\mathcal G}[\varphi]=0$. 
Recall that we denote $F(X)=f(x)$ for any $x=x_{\eps}+\eps X\in\partial\omega_{\eps}$. 
Let us consider $\Psi_{0}=\Psi_{0}[F]$ determined in Lemma~\ref{lemme:relevement:domaine:exterieur}.
Applying Proposition~\ref{prop:une:iteration}, we have 
\begin{equation}\label{eq.restetilde1}
\ueps(x)-\uzero(x)=\reste0(x)
= \Fc_{\omega}[F]\left(\tfrac{x-x_{\eps}}{\varepsilon}\right)-\Psi_{0}-\frac{\Psi_{0}}{w_{\omega}(x_{\eps})-\ln{\varepsilon}} \ \W_{\eps,\omega}(x)+\varepsilon r_{\eps}^1(x),
\end{equation}
and
$$\| r_{\eps}^1\|_{\sL^{\infty}(\partial\domzero \cap \partial\omega_{\varepsilon})} \leq 
C \| \uzero\|_{\sL^{\infty}(\partial\omega_{\varepsilon})}.
$$
Notice that here
$$\Fc_{\omega}[F]\left(\tfrac{x-x_{\eps}}{\varepsilon}\right) =-\Fc_{\omega_{\varepsilon}}[\uzero](x).$$
By applying iteratively Proposition~\ref{prop:une:iteration} with 
$$\begin{cases}
\varphi^{k}(x)= r_{\eps}^k(x),& \forall x\in\partial\domzero,\\
F^{k}(X)= r_{\eps}^k(x),& \forall x=x_{\eps}+\eps X\in\partial\omega_{\eps}, 
\end{cases}$$
we build a sequence of functions $\varphi^k$ defined on $\partial\Omega_{0}$ and $F^k$ defined on $\partial\omega$ such that, for any $N\geq 1$,
\begin{multline}\label{eq.restetildeN}
\ueps(x)=\uzero(x)+\sum_{k=0}^N \varepsilon^k 
\left[{\mathcal G}[\varphi^k](x)+ \Fc_{\omega}[F^k]\left(\tfrac{x-x_{\eps}}{\varepsilon}\right)-\Psi_{0}[F^k]+\frac{{\mathcal G}[\varphi^k](x_{\varepsilon})-\Psi_{0}[F^k]}{w_{\omega}(x_{\eps})-\ln{\varepsilon}} \ \W_{\eps,\omega}(x)\right]\\
 +\varepsilon^{N+1}\ r_{\eps}^{N+1}(x),
\end{multline}
where
\begin{equation*}
\|r_{\eps}^{k+1}\|_{\sL^{\infty}(\partial\domzero \cap \partial\omega_{\varepsilon})} \leq 
C \| r_{\eps}^{k} \|_{\sL^{\infty}(\partial\omega_{\varepsilon})} \leq C^{k+1} \| \uzero\|_{\sL^{\infty}(\partial\omega_{\varepsilon})}.
\end{equation*}
Since $\f\in\sH^{-1+\mu}(\domzero)$ (see \eqref{Laplace-0}), then $\uzero\in \sH^{1+\mu}(\domzero)$ and $\|\uzero\|_{\sL^\infty(\domzero)}\leq c \|\f\|_{\sH^{-1+\mu}(\domzero)}$.\\

An important point to notice is that we obtain estimates of the remainders in the $\sL^\infty$-norm by this method. By properties of harmonic functions (namely, by the maximum principle and Lemma~\ref{lem.GT}), we state that the estimates also hold in the energy norm $\sH^1$ on any compact subset of $\domzero$. Such a restriction also appears in the method developed by M. Dalla Riva and P. Musolino in \cite{DallaRiva2012,DallaRiva2015}. \\

Nevertheless, estimates in the energy norm in the full domain can be obtained following the strategy used in \cite{BDTV09,BD13} where one decomposes the correctors on homogeneous harmonic functions. Using \cite[Proposition 3.2]{BDTV09}, traces of functions are estimated on the singular boundary $\partial\omega_{\varepsilon}$.

\begin{remark}\label{rem.ordre1}
In \cite{BD13}, the support of $\f$ is assumed to be far away from the inclusions. With this assumption, the first term of the expansion can be simplified since the term $\Fc_{\omega}[F^0]\left(\tfrac{x-x_{\eps}}{\varepsilon}\right)-\Psi_{0}[F^0]$ in \eqref{eq.restetildeN} is of the same order of the remainder $\varepsilon\ r_{\eps}^{1}(x)$ and then can be removed. Indeed, using Lemma~\ref{lem.GT}, we have
\begin{align*}
\forall x\in\partial\omega_{\varepsilon},\quad
\left|\Fc_{\omega}[F^0]\left(\tfrac{x-x_{\eps}}{\varepsilon}\right)-\Psi_{0}[F^0]\right|
&= \left|-\uzero(x)+ \frac1{2\pi} \int_{0}^{2\pi} \uzero\left(x_{\varepsilon}+\eps\Tc^{-1} \begin{pmatrix} \cos \theta \\ \sin \theta \end{pmatrix} \right)\, d\theta \right|\\
&\leq \varepsilon \|\nabla\uzero\|_{\sL^\infty(\B(x_{0},\delta/2))}\leq C\varepsilon \|\uzero\|_{\sL^\infty(\B(x_{0},\delta))},\\[5pt]
\forall x\in\partial\domzero,\quad
\left|\Fc_{\omega}[F^0]\left(\tfrac{x-x_{\eps}}{\varepsilon}\right)-\Psi_{0}[F^0]\right|
&\leq \varepsilon\|\uzero\|_{\sL^\infty(\domzero)}.
\end{align*}
In the same way, we also have 
$$ |-\uzero(x_{\varepsilon}) - \Psi_{0}[F^0] | \leq C\varepsilon \|\uzero\|_{\sL^\infty(\domzero)}.$$
Therefore we recover the expansion in \cite{BD13}:
\begin{equation*}
\ueps(x)=\uzero(x)+ \frac{\uzero(x_{\eps})}{w_{\omega}(x_{\eps})-\ln{\varepsilon}} \ \W_{\eps,\omega}(x)
 +\varepsilon\ r_{\eps}^{1}(x).
\end{equation*}
\end{remark}

\section{$N$ inclusions well separated}\label{sec:4}

We come back now to the framework of the introduction, that is we consider $N$ inclusions $(\omega_{\eps}^{i})_{1\leq i\leq N}$ of size $\eps$ centered at points $x_{\eps}^i$. We shall consider two cases depending on the limits of the distance between centers $x_{\varepsilon}^{i}$ when $\varepsilon\rightarrow 0$:
\begin{equation*}
d_{\varepsilon}:=\min_{i\neq j}|x_{\eps}^i-x_{\eps}^j|.
\end{equation*}
\begin{enumerate}
\item The first case is the fixed limit centers: when $\eps\to 0$, the centers tend to $x_{0}^i\in \domzero$ and there exists $C>0$ such that
 \begin{equation*}
d_{\varepsilon} \geq C .
\end{equation*}

\item The second case presents a third scale $\eta(\varepsilon)$ between $\varepsilon$ and $1$ such that
\begin{equation*}
d_{\varepsilon} \geq C \eta(\varepsilon),
\end{equation*}
with a positive constant $C$. This scale $\eta(\varepsilon)$ is assumed to satisfy
\begin{equation}
\label{hypo:eta}
\eta(\varepsilon)\rightarrow 0\qquad \text{ and }\qquad\frac{\eta(\varepsilon)}{\varepsilon}\rightarrow +\infty \qquad\text{ as }\varepsilon\rightarrow 0.
\end{equation}
A typical choice for $\eta(\varepsilon)$ is $\varepsilon^\alpha$ with $\alpha\in[0,1)$ as made in \cite{BDTV09,BD13}. 
\end{enumerate}

Our aim is to apply here the strategy introduced in the case of a single defect. 
For any $1\leq i\leq N$, we associate $w_{i}$ and $\W_{\eps,i}$ as we have introduced $w_{\omega}$ and $\W_{\eps,\omega}$ in Proposition~\ref{prop:une:iteration} in the case of a single inclusion\footnote{For shortness, we replace the index $\omega^i$ by $i$}:
$$\W_{\eps,i}:=\ln|\Tc_{\eps}^i|-w_{i}\qquad \mbox{ with }\qquad w_{i}=\Gc[\ln\beta^i|\cdot-x_{\eps}^i |],$$
with $\beta^i$ the transfinite diameter of the conformal map $\Tc_{\eps}^i$ which sends $\R^2\setminus \overline{\omega_{\eps}^i}$ onto $\R^2\setminus \overline{\B(0,\eps)}$.
We aim at constructing an asymptotic expansion of the solution $\ueps$ of \eqref{Laplace-eps}.

To identify the second term of the asymptotic expansion, we superpose the contribution of each inclusion:
\begin{equation*}
\ueps(x)=\uzero(x)
+ \sum_{i=1}^N \Big( \Fc_{\omega^{i}}[F]\left(\tfrac{x-x_{\eps}^{i}}{\varepsilon}\right)-\Psi_{0,i}\Big)
+ \sum_{i=1}^N
a_{\eps,i}\W_{\eps,i}(x)+\varepsilon\reste1(x),
\end{equation*}
where 
\begin{equation*}
\Fc_{\omega^{i}}[F]\left(\tfrac{x-x_{\eps}^{i}}{\varepsilon}\right)=-\Fc_{\omega_{\varepsilon}^{i}}[\uzero](x)
=\Psi_{0,i}+\mathcal{O}\left(\tfrac{1}{|x|}\right) \qquad\mbox{ as }\quad|x|\to \infty.
\end{equation*}
We look for coefficients $a_{\eps,i}$ such that the remainder $\varepsilon\reste1$ is of smaller order than the first two terms on $\partial\domeps$. \\
$\bullet$ Let us first consider the boundary $\partial\domzero$. By construction, we have 
$$\varepsilon\reste1(x)=
- \sum_{i=1}^N \Big( \Fc_{\omega^{i}}[F]\left(\tfrac{x-x_{\eps}^{i}}{\varepsilon}\right)-\Psi_{0,i}\Big)
-\sum_{i=1}^N a_{\eps,i} \W_{\eps,i}(x)=\O\left(\left(1+\max_{i}|a_{\eps,i}|\right)\eps\right).$$
$\bullet$ Let $1\leq i \leq N$. We have for any $x\in\partial{\omega}_{\eps}^i$
\begin{align*}
\varepsilon\reste1(x) =
& -\uzero(x)-(-\uzero(x) -\Psi_{0,i}) -a_{\eps,i}\W_{\eps,i}(x)\\
&- \sum_{j\neq i} \Big( \Fc_{\omega^{j}}[F]\left(\tfrac{x-x_{\eps}^{j}}{\varepsilon}\right)-\Psi_{0,j}\Big)
	-\sum_{j\neq i} a_{\eps,j}\W_{\eps,j}(x)\\
=& \Psi_{0,i} -a_{\eps,i}\W_{\eps,i}(x)
- \sum_{j\neq i} \Big( \Fc_{\omega^{j}}[F]\left(\tfrac{x-x_{\eps}^{j}}{\varepsilon}\right)-\Psi_{0,j}\Big)
	-\sum_{j\neq i} a_{\eps,j}\W_{\eps,j}(x).
\end{align*}
Let us notice that $\W_{\eps,i}=\ln\eps-w_{i}$ on $\partial{\omega}_{\eps}^i$. 
Furthermore, since $|x-x_{\eps}^j|=\O(d_{\varepsilon})$ on $\partial{\omega}_{\eps}^i$, we have 
$$\Fc_{\omega^{j}}[F]\left(\tfrac{x-x_{\eps}^{j}}{\varepsilon}\right)-\Psi_{0,j} = \O\left(\frac{\varepsilon}{d_{\varepsilon}}\right),\qquad\mbox{ for }j\neq i.$$
Thus
\begin{align*}
\varepsilon\reste1(x) 
=& \Psi_{0,i}-a_{\eps,i}(\ln\eps-w_{i}(x)) +\mathcal{O}\left(\frac{\varepsilon}{d_{\varepsilon}}\right) 
	-\sum_{j\neq i} a_{\eps,j}\left( \ln \left|\eps\Tc^{j}\left(\tfrac{x-x_{\eps}^j}{\eps}\right)\right|-w_{j}(x)\right).
\end{align*}
By a Taylor expansion of the functions $w_{j}$, we deduce
\begin{align*}
\varepsilon\reste1(x) 
=& \Psi_{0,i}-a_{\eps,i}\left(\ln\eps-w_{i}(x_{\eps}^i)+\O(\eps)\right) +\mathcal{O}\left(\frac{\varepsilon}{d_{\varepsilon}}\right)\\
&-\sum_{j\neq i} a_{\eps,j}\left( \ln \left|\eps\Tc^{j}\left(\tfrac{x-x_{\eps}^j}{\eps}\right)\right|-w_{j}(x_{\eps}^i)+\O(\eps)\right)\\
=& \Psi_{0,i}-a_{\eps,i}(\ln\eps-w_{i}(x_{\eps}^i))
	-\sum_{j\neq i} a_{\eps,j}\left( \ln \beta^{j}|x_{\eps}^i-x_{\eps}^j|-w_{j}(x_{\eps}^i)\right)\\
&+\left(1+\max_{j}|a_{\eps,j}|\right)\mathcal{O}\left(\frac{\varepsilon}{d_{\varepsilon}}\right).
\end{align*}
 Then, one cancels the leading terms (up to the order $\varepsilon/{d_{\varepsilon}}$) by solving 
\begin{equation}\label{eq.systMeps}
\Mc_{\eps}
\begin{pmatrix}
a_{\eps,1}\\ \vdots \\ a_{\eps,N}
\end{pmatrix}
= \begin{pmatrix}
\Psi_{0,1}\\ \vdots\\ \Psi_{0,N}
\end{pmatrix},
\end{equation}
with
\begin{equation*}
\Mc_{\eps} = \begin{pmatrix}
\ln\eps-w_{1}(x_{\eps}^1) & & \ln \beta^{j} |x_{\eps}^i-x_{\eps}^j|-w_{j}(x_{\eps}^i) \\ 
&\ddots& \\
\ln \beta^{j} |x_{\eps}^i-x_{\eps}^j| -w_{j}(x_{\eps}^i)
&& \ln\eps-w_{N}(x_{\eps}^N) 
\end{pmatrix}.
\end{equation*}
In the following, we distinguish several configurations according to the behavior of $d_{\eps}$ and the confi\-gurations of the inclusions. 
In each case, we prove that the matrix $\Mc_{\eps}$ is invertible. Then we deduce the existence of coefficients $a_{\eps,i}$ and a good estimate for the remainder $\reste1$. 

\begin{remark}
For a single inclusion ($N=1$), we recover that $a_{\varepsilon}= \frac{\Psi_{0,1}}{\ln\eps-w_{1}(x_{\eps}^1)}$, see \eqref{eq.restetilde1}.
\end{remark}

\subsection{First case: N inclusions at distance $\O(1)$}
Let us first assume that $d_{\eps}=\O(1)$: there exists $c>0$ such that
$$ |x_{\eps}^i-x_{\eps}^j| \in\left[c^{-1} , c\right],\qquad\forall i\neq j.$$
Note that $\ln \beta^{j} |x_{\eps}^i-x_{\eps}^j| -w_{j}(x_{\eps}^i)=\O(1)$, then the matrix $\Mc_{\eps}$ reads
\begin{equation*}
\Mc_{\eps} = \ln\eps\ {\mathcal I}_{N} + \O(1),
\end{equation*}
which is invertible and verifies 
\begin{equation}\label{eq.Mepsinv}
\Mc_{\eps}^{-1}=\frac{1}{\ln\eps}\ {\mathcal I}_{N}+\O\left(\frac{1}{\ln^2\eps}\right).
\end{equation}
Consequently, the coefficients $a_{\eps,j}$ satisfy 
$$\max_{j}|a_{\eps,j}| = \O\left(\frac{1}{\ln\eps}\right).$$
For the asymptotic construction, we do not use the main term of the inverse of $\Mc_{\eps}$ otherwise the remainder would be in $\O(1/\ln\eps)$. Solving exactly \eqref{eq.systMeps} gives convenient coefficients $a_{\eps,j}$ so that the remainder is in $\O(\eps)$:
$$
\ueps=\uzero
+\sum_{i=1}^N \Big( \Fc_{\omega^{i}}[F]\left(\tfrac{\cdot-x_{\eps}^{i}}{\varepsilon}\right)-\Psi_{0,i}\Big)
+ \sum_{i=1}^N a_{\eps,i}\W_{\eps,i}+\O(\eps)\qquad\mbox{ on }\partial\domeps.
$$
In the first sum, there is no interaction but just a superposition of the effect of each inclusion separately. The interaction appears in the coefficients $a_{\eps,i}$ but only in the second order term (in $\O(1/\ln^2\eps)$) as we can see by using \eqref{eq.Mepsinv}.
We can adapt the construction at any order using the inverse $\Mc_{\eps}^{-1}$ to construct a suitable linear combination of the lifting terms and decrease the order of the remainder terms. 

\begin{remark} 
Let us consider $N=2$. Then $\Mc_{\eps}^{-1}$ is given by 
$$\Mc_{\eps}^{-1}=\frac{1}{\delta(\eps)}\begin{pmatrix}
\ln\eps-w_{2}(x_{\eps}^2) & -\ln \beta^{2} |x_{\eps}^1-x_{\eps}^2|+w_{2}(x_{\eps}^1) \\[5pt]
-\ln \beta^{1} |x_{\eps}^1-x_{\eps}^2| +w_{1}(x_{\eps}^2) & \ln\eps-w_{2}(x_{\eps}^2) 
\end{pmatrix},
$$
with 
$$\delta(\eps)=\left(\ln\eps-w_{1}(x_{\eps}^1)\right)\left(\ln\eps-w_{2}(x_{\eps}^2)\right)
-\left(\ln\beta^1|x_{\eps}^1-x_{\eps}^2|-w_{1}(x_{\eps}^2)\right)\left(\ln\beta^2|x_{\eps}^1-x_{\eps}^2|-w_{2}(x_{\eps}^1)\right).
$$
If $\omega^1$ and $\omega^2$ are unit ball, then $\beta^1=\beta^2=1$ and we recover the expressions obtained in \cite[p. 211]{BD13} when $\f$ is supported far away the inclusions. Indeed, Remark~\ref{rem.ordre1} allows to replace $\Psi_{0,i}$ by $-\uzero(x_{\eps}^i)$ and to remove $\Fc_{\omega^{i}}[F]\left(\tfrac{\cdot-x_{\eps}^{i}}{\varepsilon}\right)-\Psi_{0,i}$.
\end{remark}

\subsection{Second case: N inclusions at distance $\O(\eta(\eps))$}
We assume now that the distance between any two inclusions is of order $\eta(\varepsilon)$: there exists $c>0$ such that
$$\eta(\eps)|x_{\eps}^i-x_{\eps}^j| \in\left[c^{-1} , c\right],\qquad\forall i\neq j.$$
Since $\ln\beta^j|x_{\eps}^i-x_{\eps}^j|=\ln \eta(\eps)+\O(1)$ for any $i\neq j$ and $w_{j}(x_{\eps}^i)=\O(1)$ for any $i,j$, the matrix $\Mc_{\eps}$ satisfies
$$\Mc_{\eps} = 
\ln\eps\ {\mathcal I}_{N}+\ln \eta(\eps) (\ {\mathcal H}_{N}-\ {\mathcal I}_{N})+\O(1)= \Mc_{\varepsilon}^0+ \O(1),$$
where $\Mc_{\varepsilon}^0=(\ln\varepsilon -\ln\eta(\varepsilon)) \ {\mathcal I}_{N}+\ln\eta(\varepsilon) \ {\mathcal H}_{N}$ and $\ {\mathcal H}_{N}$ is the square matrix of size $N$ with every coefficients equal to $1$. Since the rank of ${\mathcal H}_{N}$ is one, there exists an orthogonal matrix $P$ such that 
\begin{eqnarray*}\ {\mathcal H}_{N}=
\begin{pmatrix}
1 & \ldots & 1\\
\vdots&&\vdots \\
1 &\ldots& 1
\end{pmatrix}\
= P \begin{pmatrix}
N & & & 0\\
 & 0& & \\
& &\ddots & \\
0 && & 0
\end{pmatrix} P^{-1}.
\end{eqnarray*}
Thus
$$ \Mc_{\varepsilon}^0 = P
\begin{pmatrix}
\ln\varepsilon +(N-1)\ln\eta(\varepsilon) & 0\\
0 &(\ln\varepsilon -\ln\eta(\varepsilon)) \ {\mathcal I}_{N-1} \\
\end{pmatrix} P^{-1},$$
which is clearly invertible for $\varepsilon$ small. The inverse is
\begin{align*}
(\Mc_{\varepsilon}^0)^{-1}
&= P\begin{pmatrix}
\frac{1}{\ln\varepsilon +(N-1)\ln\eta(\varepsilon) } & 0\\
0 &\frac{1}{\ln\varepsilon -\ln\eta(\varepsilon) }\ {\mathcal I}_{N-1}
\end{pmatrix} P^{-1}\\
&= \cfrac{1}{\ln\varepsilon -\ln\eta(\varepsilon) } {\mathcal I}_{N} +\frac{1}N\left[\cfrac{1}{\ln\varepsilon +(N-1)\ln\eta(\varepsilon) } -\cfrac{1}{\ln\varepsilon -\ln\eta(\varepsilon) }\right]{\mathcal H}_{N}.
\end{align*}
Consequently, $\Mc_{\eps}$ is invertible and, by Neumann series, we can remark
\begin{equation*}
\Mc_{\eps}^{-1}
=\frac{1}{\ln\eps}\left(\frac{1}{1 -\frac{\ln\eta(\varepsilon)}{\ln\varepsilon} } {\mathcal I}_{N} +\frac{1}N\left[\frac{1}{1 +(N-1)\frac{\ln\eta(\varepsilon)}{\ln\varepsilon} } -\frac{1}{1 -\frac{\ln\eta(\varepsilon)}{\ln\varepsilon} }\right]{\mathcal H}_{N}\right) + \O\left(\frac{1}{\ln^2\eps}\right).
\end{equation*}
The asymptotic expansion of $\ueps$ is thus given by
\begin{multline*}
\ueps(x) = \uzero(x)+\sum_{i=1}^N \Big( \Fc_{\omega^{i}}[F]\left(\tfrac{x-x_{\eps}^{i}}{\varepsilon}\right)-\Psi_{0,i}\Big)\\
+\left\langle\Mc_{\eps}^{-1}\begin{pmatrix}
\Psi_{0,1}\\ \vdots\\ \Psi_{0,N}
\end{pmatrix},\begin{pmatrix}
\W_{\eps,1}(x)\\ \vdots\\ \W_{\eps,N}(x)
\end{pmatrix}
\right\rangle+\O\left(\frac{\eps}{\eta(\varepsilon)\ln{\varepsilon}}\right),
\end{multline*}
where we have expressed the coefficients $a_{\eps,i}$ according to the resolution of the system~\eqref{eq.systMeps}.

In order to compute the leading corrector, one has to consider the limit of $\ln{\varepsilon}\Mc_{\varepsilon}^{-1}$ when $\varepsilon\rightarrow 0$.
The limit matrix depends on the ratio $\ln{\eta(\varepsilon)}/\ln{\varepsilon}$. If this ratio has a finite limit $l$, then $l\in[0,1)$ according to \eqref{hypo:eta} and 
\begin{equation}\label{eq.Meps-1alphabis}
\Mc_{\eps}^{-1}\sim \frac{1}{\ln\eps}\left(\frac{1}{1-l} {\mathcal I}_{N} +\frac{1}N\left[\frac{1}{1 +(N-1)l} -\frac{1}{1-l}\right]{\mathcal H}_{N}\right).
\end{equation}
In expression \eqref{eq.Meps-1alphabis}, we see that the situation is now more complex than in the previous case \eqref{eq.Mepsinv} since all the defects interact at leading order $\O(1/\ln\eps)$. \\
As an example, if $\eta(\varepsilon)=\varepsilon^\alpha$ and $N=2$, then $l=\alpha$ and we have
\begin{multline*}
\ueps(x) 
= \uzero(x) +\sum_{i=1}^2 \Big( \Fc_{\omega^{i}}[F]\left(\tfrac{x-x_{\eps}^{i}}{\varepsilon}\right)-\Psi_{0,i}\Big)\\
+\frac{1}{(1-\alpha^2)\ln\eps}\left\langle\begin{pmatrix}1 & -\alpha\\ -\alpha & 1
\end{pmatrix}
\begin{pmatrix}
\Psi_{0,1}\\ \Psi_{0,2}
\end{pmatrix},\begin{pmatrix}
\W_{\eps,1}(x)\\ \W_{\eps,2}(x)
\end{pmatrix}
\right\rangle+\O\left(\frac{1}{\ln^2(\varepsilon)}\right).
\end{multline*}
If $\f$ is supported far away the inclusions, we obtain, using Remark~\ref{rem.ordre1}, the expansion of \cite[Section 4.2]{BD13}:
\[\ueps(x)
= \uzero(x) -\frac{\uzero(x^1_{0})}{(1+\alpha)\ln\eps} (\W_{\eps,1}(x)+ \W_{\eps,2}(x))
+\O\left(\frac{1}{\ln^2(\varepsilon)}\right).
\]

\subsection{More complex geometrical settings}

There are various situations which can be dealt by our approach. In order to show that the method is versatile, let us shortly consider two more complex geometrical setting. 

\subsubsection{Two inclusions at distance $\O(\eps^\alpha)$ and the others at distance $\O(1)$}
We assume $N\geq 3$ and, with a possible renumbering, that the first two inclusions are at distance $\O(\eps^\alpha)$ whereas the others are at distance $\O(1)$, namely there exists $c>0$
$$\eps^{-\alpha}|x_{\eps}^1-x_{\eps}^2|\in [c^{-1}, c]\qquad\mbox{ and }\qquad
|x_{\eps}^i-x_{\eps}^j|\in [c^{-1}, c],\quad\forall i<j, (i,j)\neq(1,2).
$$
In this case, $\Mc_{\eps}$ has the expansion
\begin{equation*}
\Mc_{\eps}=\ln\eps\begin{pmatrix}
1 & \alpha & &0\\
\alpha & 1 & &&\\
 & & \ddots & \\
 0 && & 1 
\end{pmatrix} +\O(1),
\end{equation*}
with inverse satisfying
\begin{equation*}
\Mc_{\eps}^{-1}=\frac 1{\ln\eps(1-\alpha^2)}\begin{pmatrix}
1 & -\alpha & &0\\
-\alpha & 1 & &&\\
 & & \ddots & \\
 0 && & 1 
\end{pmatrix} +\O\left(\frac 1{\ln^2\eps}\right).
\end{equation*}
We notice that only the first two defects interact at the leading order $\O(1/\ln\eps)$ while the interaction involving the other defects is postponed at order $\O(1/\ln^2\eps)$. 
\begin{remark}
If we use the expansion of $\Mc_{\eps}^{-1}$, we have the {\it weak} asymptotic expansion for $\ueps$ with a worse remainder:
\begin{equation*}
\begin{split}
\ueps=& \uzero 
+\sum_{i=1}^N \Big( \Fc_{\omega^{i}}[F]\left(\tfrac{.-x_{\eps}^{i}}{\varepsilon}\right)-\Psi_{0,i}\Big)\\
&+\frac{\Psi_{0,1}-\alpha \Psi_{0,2}}{\ln\eps(1-\alpha^2)} \W_{\eps,1}
+\frac{\Psi_{0,2}-\alpha \Psi_{0,1}}{\ln\eps(1-\alpha^2)} \W_{\eps,2}
+\sum_{j=3}^N\frac{\Psi_{0,j}}{\ln\eps(1-\alpha^2)} \W_{\eps,j}
+\O\left(\frac{1}{\ln^2\eps}\right).
\end{split}
\end{equation*}

This expansion sheds light the fact that at the first order, there is only an interaction between the first two inclusions.
\end{remark}

\subsubsection{Particular case with 3 inclusions}
Let us analyze a last situation with three inclusions and three scales. We consider 
$0< \beta \leq \alpha<1$ and $c>0$ such that
$$
\eps^{-\alpha}|x_{\eps}^1-x_{\eps}^2| \in [c^{-1},c],\qquad
\eps^{-\beta}|x_{\eps}^i-x_{\eps}^3| \in [c^{-1},c],\quad\mbox{ for }i=1,2.
$$
The matrix $\Mc_{\eps}$ is such that
\begin{equation*}
\Mc_{\eps}=\ln\eps\ M_{\alpha,\beta} +\O(1)\qquad\mbox{ with }\qquad M_{\alpha,\beta}=\begin{pmatrix}
1 & \alpha & \beta\\
\alpha & 1 & \beta\\
\beta & \beta & 1 
\end{pmatrix}.
\end{equation*}
Computing the determinant of $M_{\alpha,\beta}$, we have
$$\det M_{\alpha,\beta}=(\alpha-1)(2\beta^2-\alpha-1),$$
which does not vanish when $0<\beta\leq\alpha<1$ (since the trinomial function $\alpha\mapsto 2\alpha^2-\alpha-1$ is negative on $(0,1)$). 
Consequently $\Mc_{\eps}$ is invertible and the inverse of the leading term is explicit
$$M_{\alpha,\beta}^{-1} = P 
 \begin{pmatrix}
\frac{2}{\alpha+2+\sqrt{\alpha^2+8\beta^2}} &0 & 0\\
0 & \frac{2}{\alpha+2-\sqrt{\alpha^2+8\beta^2}} & 0\\
0 & 0 & \frac 1{1-\alpha}
\end{pmatrix}P^{-1},$$
where $P$ is the orthogonal matrix given by
$$
P=\frac{1}{\sqrt 6}\begin{pmatrix} \sqrt 2 & \sqrt 3 & 1\\ \sqrt 2 & -\sqrt 3 & 1\\ \sqrt 2 & 0 & -2
\end{pmatrix}.
$$
Consequently, there is a complete interaction between the three defects.\\
When $\alpha=\beta$, we recover the main term in \eqref{eq.Meps-1alphabis} for $N=3$.
\\

\noindent {\bf Aknowledgements.}
The authors are partially supported by the ANR (Agence Nationale de la Recher\-che), projects {\sc Aramis} n$^{\rm o}$ ANR-12-BS01-0021 (for the first two authors) and {\sc DYFICOLTI} n$^{\rm o}$ ANR-13-BS01-0003-01 (for the third one).

\def\cftil#1{\ifmmode\setbox7\hbox{$\accent"5E#1$}\else
  \setbox7\hbox{\accent"5E#1}\penalty 10000\relax\fi\raise 1\ht7
  \hbox{\lower1.15ex\hbox to 1\wd7{\hss\accent"7E\hss}}\penalty 10000
  \hskip-1\wd7\penalty 10000\box7} \newcommand{\noopsort}[1]{}
  \let\v\vv\def\cprime{$'$}

\adrese

\end{document}